\newtheorem{theorem}{Theorem}[section]
\newtheorem{corollary}[theorem]{Corollary}
\newtheorem{proposition}[theorem]{Proposition}
\newtheorem{lemma}[theorem]{Lemma}
\theoremstyle{definition}
\newtheorem{definition}[theorem]{Definition}
\newtheorem{example}[theorem]{Example}
\numberwithin{equation}{section}
\newcommand{\F}{\mathbb{F}}
\newcommand{\define}[1]{\emph{#1}}
\newcommand{\cS}{\mathcal{S}}
\newcommand{\set}[1]{\left\{#1\right\}}
\newcommand{\setof}[2]{\set{#1 \thinspace : \thinspace #2}}
\begin{document}

\title{Projection-Forcing Multisets of Weight Changes}
\author{Josh Brown Kramer \and Lucas Sabalka}

\begin{abstract}

Let $\F$ be a finite field.  A multiset $S$ of integers is
\emph{projection-forcing} if for every linear function $\phi
: \F^n \to \F^m$ whose multiset of weight changes is $S$, $\phi$ is a
coordinate projection up to permutation and scaling of entries.  The MacWilliams Extension
Theorem from coding theory says that $S = \{0, 0, \ldots, 0\}$ is projection-forcing.  We give a
(super-polynomial) algorithm to determine whether or not a given $S$ is
projection-forcing.  We also give a condition that can be checked in polynomial time that implies that $S$ is projection-forcing.  This result is a generalization of the MacWilliams Extension Theorem and work by the first author.

\end{abstract}

\maketitle

\section{Introduction}\label{sec:intro}

Let $\F = \F_q$ be a finite field of prime power order $q$.  In coding theory, two of the most important aspects of a subspace of $\F^n$ (also called a linear code) are its structure as a vector space and its \emph{weight distribution}, to be defined shortly.  In this paper, we look at the interplay between these two aspects, by determining the structure of some linear maps from their effects on weight distributions.  In this section we make our results more precise, and discuss previous results of this type.

We begin with some notation.

\begin{definition}[Hamming weight, weight distribution]
     Let $\F$ be a finite field, and let $V \subset \F^n$ be a linear code.  The \define{Hamming weight} of a vector $v \in V$, denoted $w(v)$, is the number of nonzero entries of $v$. The \define{weight distribution} of $V$ is the multiset of Hamming weights of elements of $V$.
\end{definition}

\begin{definition}[multiset of weight changes]
    Let $U \subseteq \F^n$ and $V \subseteq \F^m$ be linear codes, and let $\phi : U \to V$ be linear.  The \define{multiset of weight changes} of $\phi$ is the multiset
        $$\setof{w(u) - w(\phi(u))}{u \in U}.$$
\end{definition}

\begin{definition}[weight-preserving linear functions]
    Let $V \subseteq \F^n$ and $W \subseteq \F^m$ be linear codes.  We say that a linear function
$\phi : V \to W$ is \define{weight-preserving} if for all $v \in V$, we have $w(\phi(v)) = w(v)$.  Equivalently, $\phi$ is weight-preserving if its multiset of weight changes is $\{0,0,\ldots,0\}$.
\end{definition}

The MacWilliams  Extension Theorem \cite{MacWilliams} says that any weight-preserving linear function simply reorders entries and scales by nonzero constants.  To be more precise, we give the following definition.

\begin{definition}[monomial equivalence]
Let $V,W \subseteq \F^n$ be linear codes.  A linear function $\phi : V \to W$ is said to be a
\define{monomial equivalence} if $\phi$ is multiplication by an $\F$-valued $n \times
n$ matrix with exactly one nonzero entry in each row and column.
\end{definition}

It is clear that a monomial equivalence is weight-preserving.  In \cite{MacWilliams}, MacWilliams proved the converse:

\begin{theorem}[MacWilliams Extension Theorem \cite{MacWilliams, bgg1978}]
\label{thm:MacWilliams}
  Let $\F$ be a finite field, and let $U,V \subseteq \F^n$ be linear codes.
  A linear function $\phi : U \to V$ is weight-preserving if and only if it is a monomial equivalence.
\end{theorem}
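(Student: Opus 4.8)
The ``if'' direction is immediate from the definitions (and is already noted above), so the content lies in the ``only if'' direction, for which the plan is to run the classical combinatorial argument in the style of Bogart--Goldberg--Gordon. I would begin by noting that a weight-preserving $\phi$ is injective, since $w(\phi(u)) = w(u)$ forces $\phi(u) \ne 0$ whenever $u \ne 0$; hence $\phi$ is an isomorphism onto $V$ and $\dim U = \dim V =: k$. Choose a $k \times n$ matrix $G$ whose rows are a basis of $U$, and let $G'$ be the $k \times n$ matrix whose $i$-th row is the image under $\phi$ of the $i$-th row of $G$; then $G'$ generates $V$ and $\phi(xG) = xG'$ for every $x \in \F^k$. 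The theorem then reduces to the following claim about a pair of matrices: if $w(xG) = w(xG')$ for every $x \in \F^k$, then the columns of $G'$ can be obtained from those of $G$ by a permutation followed by a rescaling of each column by a nonzero scalar. Indeed, once we have $g'_j = \lambda_j g_{\tau(j)}$ with $\tau$ a permutation of $[n]$ and each $\lambda_j \ne 0$, then $G' = GM$ for the monomial matrix $M$ with $M_{\tau(j),j} = \lambda_j$, so $\phi$ is the restriction to $U$ of multiplication by $M$.

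To prove the claim, I would sort the $n$ columns of $G$ by the one-dimensional subspace (``line'') of $\F^k$ that they span, setting the zero columns aside, and for each line $\ell$ let $a_\ell$ (resp.\ $b_\ell$) be the number of columns of $G$ (resp.\ of $G'$) lying on $\ell$. The $j$-th coordinate of $xG$, namely $x \cdot g_j$, vanishes exactly when $x$ is orthogonal to the line of $g_j$, so $w(xG) = n_G - \sum_\ell a_\ell\,[x \perp \ell]$, where $n_G$ is the number of nonzero columns of $G$, and similarly for $G'$. Summing the identity $w(xG) = w(xG')$ over all $x \in \F^k$ and using that $\#\{x : x \cdot c \ne 0\} = q^k - q^{k-1}$ for each fixed nonzero $c$ yields $n_G = n_{G'}$, hence $\sum_\ell (a_\ell - b_\ell) = 0$. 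Writing $d_\ell := a_\ell - b_\ell$, what is left is $\sum_\ell d_\ell\,[x \perp \ell] = 0$ for all $x$; since $[x \perp \ell]$ records whether the line $\ell$ lies in the hyperplane $x^\perp$, and every hyperplane of $\F^k$ arises this way, this is exactly $\sum_{\ell \subseteq H} d_\ell = 0$ for every hyperplane $H$.

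To finish, I would fix a line $\ell_0$ (assuming $k \ge 2$, since $k \le 1$ is trivial) and sum the relation $\sum_{\ell \subseteq H} d_\ell = 0$ over all hyperplanes $H$ containing $\ell_0$. A line $\ell \ne \ell_0$ lies in exactly $\tfrac{q^{k-2}-1}{q-1}$ such hyperplanes and $\ell_0$ lies in $\tfrac{q^{k-1}-1}{q-1}$ of them, so after substituting $\sum_{\ell \ne \ell_0} d_\ell = -d_{\ell_0}$ the double sum collapses to $\bigl(\tfrac{q^{k-1}-1}{q-1} - \tfrac{q^{k-2}-1}{q-1}\bigr) d_{\ell_0} = q^{k-2} d_{\ell_0}$, forcing $d_{\ell_0} = 0$. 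Thus $a_\ell = b_\ell$ for every line, and together with $n_G = n_{G'}$ this lets the columns of $G$ and $G'$ be matched up as the claim requires, completing the proof. I expect the only real obstacle to be this last counting identity --- in essence the fact that the point--hyperplane incidence matrix of the projective space $\mathbb{P}(\F^k)$ is nonsingular over $\mathbb{Q}$ --- along with the modest care needed around zero columns and very small $k$; everything else is unwinding definitions. (An alternative would be MacWilliams' original character-sum proof via the MacWilliams identity, but the argument sketched here is more self-contained.)
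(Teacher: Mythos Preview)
The paper does not give its own proof of Theorem~\ref{thm:MacWilliams}; it is stated with citation to MacWilliams and to Bogart--Goldberg--Gordon, and then recovered implicitly as the $S=\{0,\dots,0\}$ instance of the paper's framework (the paper remarks that the BGG proof rests on the invertibility of $M_{k,q}$, established here as Proposition~\ref{prop:q-ary M inverse}). Your argument is precisely the BGG combinatorial proof: encode $U$ and $\phi(U)$ by column-multiplicity vectors $(a_\ell)$, $(b_\ell)$ indexed by lines of $\F^k$, read the weight-preservation hypothesis as saying that $d_\ell=a_\ell-b_\ell$ sums to zero over every hyperplane, and then invert the point--hyperplane incidence relation by a double count over hyperplanes through a fixed line. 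This is correct, and it is the same mechanism that drives the paper's Theorem~\ref{main2} specialized to the all-zero multiset---in the language of that proof, you are showing $R=Q$ because $M_{k,q}^{-1}\cdot 0=0$---with the invertibility proved by your double-counting identity rather than by the explicit inverse formula of Proposition~\ref{prop:q-ary M inverse}. One harmless slip: weight preservation yields injectivity but not surjectivity onto $V$, so ``$\phi$ is an isomorphism onto $V$ and $\dim U=\dim V$'' should read ``onto $\phi(U)$''; since you only ever use the image, nothing else changes.
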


In this paper we will generalize the MacWilliams Extension Theorem by determining the structure of some linear functions with multisets of weight changes other than $\{0,0,\ldots,0\}$.

The multiset of weight changes has some redundant information.  The weight change associated to the 0 vector is always 0.  Furthermore, for any nonzero scalar $\alpha \in \F$, the weight change of $v$ is the same as that of $\alpha v$.  For this reason, and because some statements become easier to make, we introduce the projective multiset of weight changes for a linear map and the projective weight distribution of a linear code.

\begin{definition}[projective]
  Let $V \subseteq \F^n$ be a linear code.  Define the \define{projective space} $P(V)$ to be $(V \setminus \{0\}) / \sim$, where $v_1 \sim v_2$ if $v_1 = \alpha v_2$ for some $\alpha \in \F \setminus \{0\}$.  Define the \define{projective geometry}, $PG(k-1,q)$, to be the projective space $P(\F_q^k)$.
  
  The \define{projective weight distribution} of $V$ is the multiset:
  	$$\setof{w(v)}{v \in P(V)},$$
where $w(v)$ is the common weight of the vectors in the equivalence class $v$.

Given linear codes $U,V \subseteq \F^m$ and linear map $\phi : U \to V$, the  \define{projective multiset of weight changes} of $\phi$ is the multiset:
        $$\setof{w(u) - w(\phi(u))}{u \in P(U)}.$$
\end{definition}

Notice that if $S$ is the projective multiset of weight changes for a linear map from a $k$-dimensional code of $\F_q^n$, then $|S| = (q^{k} - 1)/(q - 1)$.

In \cite{bk07}, the first author proved a generalization of the MacWilliams Extension Theorem that can be expressed in terms of the projective multiset of weight changes.  To state this result, we need the following definition.

\begin{definition}[projection]
Let $U \subseteq \F^n$ and $V \subseteq \F^m$ be linear codes.  A linear function $\phi : V \to W$ is said to be a \define{coordinate projection up to monomial equivalence} if $\phi$ is multiplication by an $\F$-valued matrix with at most one nonzero entry in each row and column.  Throughout the paper we will simply call such a function a \define{projection}.
\end{definition}

\begin{theorem} \cite{bk07}
    Let $\F$ be a finite field, and let $U \subseteq \F^n$ and $V \subseteq \F^m$ be linear codes.  If the projective multiset of weight changes of a linear function $\phi : U \to V$ is $\set{c,c,\ldots,c}$, where $c$ is a constant, then $\phi$ is a projection.
\end{theorem}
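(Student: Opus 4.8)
The plan is to pass to the standard ``projective system'' picture, translate the constant-weight-change hypothesis into a statement about the hyperplane sums of a single signed multiset of points of $PG(k-1,q)$, and then use the rigidity of the point--hyperplane incidence structure to pin that multiset down. We may assume $\phi$ is onto (replace $V$ by $\phi(U)$). We may also delete every coordinate of $\F^n$ on which $U$ vanishes identically and every coordinate of $\F^m$ on which $\phi(U)$ vanishes identically: the associated coordinate inclusions and projections are themselves projections, a composite of projections is a projection, and these deletions change no weights. Finally, write $k=\dim U$; the cases $k=0$ (trivial) and $k=1$ are immediate by hand — after the deletions a one-dimensional code has a nowhere-zero generator $v$, $\phi(v)$ is nowhere zero, $c=n-m\ge 0$, and the $n\times m$ matrix with $(j,j)$-entry $\phi(v)_j/v_j$ and zeros elsewhere exhibits $\phi$ as a projection. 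So assume $k\ge 2$.

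Fix a rank-$k$ generator matrix $G$ ($k\times n$) of $U$ and let $G'$ ($k\times m$) be the matrix with $\phi(xG)=xG'$ for all $x\in\F^k$. Let $\mathcal P,\mathcal P'$ be the multisets of points of $PG(k-1,q)$ recorded by the columns of $G$, $G'$ (all columns nonzero, by the reduction). For $x\ne 0$, the codeword $xG$ vanishes in coordinate $j$ exactly when the $j$-th point of $\mathcal P$ lies on the hyperplane $x^{\perp}$, so $w(xG)=n-|\mathcal P\cap x^{\perp}|$ and likewise $w(xG')=m-|\mathcal P'\cap x^{\perp}|$ (intersections with multiplicity). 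As $[x]$ runs over $P(U)$ the hyperplane $x^{\perp}$ runs over all hyperplanes of $PG(k-1,q)$, so the hypothesis says exactly that the signed multiset $f:=\mathbf 1_{\mathcal P}-\mathbf 1_{\mathcal P'}$, regarded as a function $PG(k-1,q)\to\mathbb Z$, has the same hyperplane sum $(n-m)-c$ on every hyperplane.

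The crux is that for $k\ge 2$ the point--hyperplane incidence matrix $A$ of $PG(k-1,q)$ is invertible over $\mathbb Q$: a short count gives $AA^{\top}=q^{k-2}I+\tfrac{q^{k-2}-1}{q-1}J$ (one point lies on $\tfrac{q^{k-1}-1}{q-1}$ hyperplanes, two distinct points on $\tfrac{q^{k-2}-1}{q-1}$), and this is positive definite. Hence ``taking hyperplane sums'' is an injective linear map, so $f$ is determined by its (constant) hyperplane sum and therefore equals the constant function $e:=\bigl((n-m)-c\bigr)\big/\tfrac{q^{k-1}-1}{q-1}$, which is then necessarily an integer. Summing $f$ over all $\tfrac{q^k-1}{q-1}$ points gives $n-m=e\cdot\tfrac{q^k-1}{q-1}$, and substituting back yields $c=e\,q^{k-1}$; in particular the hypothesis $c\ge 0$ (which is genuinely needed — small examples produce constant negative weight change without being projections) gives $e\ge 0$. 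Thus $\mathcal P$ is $\mathcal P'$ together with $e$ copies of every point of $PG(k-1,q)$: after a monomial transformation of $\F^n$, $G=[\,G'\mid\Sigma\mid\cdots\mid\Sigma\,]$ with $e$ blocks $\Sigma$ a generator matrix of the simplex code. Deleting those $e\cdot\tfrac{q^k-1}{q-1}$ coordinates is a projection that agrees with $\phi$ on $U$, so $\phi$ is a projection.

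I expect the incidence-matrix step to be the real content: the entire argument rests on the point--hyperplane incidence matrix of $PG(k-1,q)$ having full rank over $\mathbb Q$, which is precisely what converts ``the two codes undergo identical weight changes'' into ``the two defining multisets of points differ by whole copies of $PG(k-1,q)$,'' i.e.\ by copies of the simplex code. The surrounding reductions, the degenerate low-dimensional cases, and the bookkeeping involved in unwinding the monomial coordinate change are routine.
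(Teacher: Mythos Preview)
Your proof is correct and rests on the same core idea as the paper's treatment: the invertibility over $\mathbb Q$ of the point--hyperplane incidence structure of $PG(k-1,q)$, which forces the multiplicity-difference function $f$ to be constant, hence a nonnegative integer, hence $\mathcal P'\subseteq\mathcal P$ as multisets. The paper does not prove this theorem directly but recovers it as a corollary of its general machinery: it works with the complement $M_{k,q}$ of the incidence matrix, writes down an explicit formula for $M_{k,q}^{-1}$, and then deduces the constant-$S$ case from the split-difference criterion by computing $\delta_q(\{c,\dots,c\})=c\ge 0>-q^{k-1}$. Your argument bypasses the split-difference parameter and reads off the multiplicity vector directly for this particular $S$; that is a little more streamlined here but not a genuinely different route, since both arguments bottom out in the same rank computation (your $AA^{\top}=q^{k-2}I+\tfrac{q^{k-2}-1}{q-1}J$ is the incidence-side counterpart of the paper's formula for $M_{k,q}^{-1}$).

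Your observation that $c\ge 0$ is genuinely needed is a good catch: the statement as quoted omits it, but the paper's own corollary restates the result with ``$c$ positive,'' and the map $x\mapsto(x,x)$ over $\F_2$ (constant projective weight change $-1$, not a projection under the paper's definition) shows the conclusion can fail for $c<0$.
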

Brown Kramer used this result to study a problem in extremal combinatorics posed in \cite{realspace}.

 The results of MacWilliams and Brown Kramer show that it is sometimes possible to determine that a map is a projection simply by knowing its multiset of weight changes.   The goal of this paper is to better understand for which multisets this is true.  To that end, we introduce the following definitions:
 
\begin{definition}[realizes, projection-forcing]
    If $S$ is the projective multiset of weight changes of $\phi$, we say that $\phi$
    \define{realizes} $S$.  If $S$ is such that every $(\F_q)$-linear map that realizes $S$ is a projection,
    we say that $S$ is \define{projection-forcing} or more explicitly,
    \define{$q$-projection-forcing}.
\end{definition}

One might think that if $S$ is realized by a projection, then $S$ is projection-forcing.  The following small example shows that this is false.  Thus we cannot always determine whether or not a function is a projection by looking at its multiset of weight changes.

\begin{example}\label{example:sameSet}

We present two linear maps, $\phi_1$ and $\phi_2$.  They have the same
multiset of weight changes, but one of them is a projection, and the other is not.

Let $V_1 \subseteq \F_2^7$ be the vector space generated by
$$\{(1,1,1,1,0,0,0),(1,1,0,0,1,1,0),(1,0,1,0,1,0,1)\}.$$

Let $V_2 \subseteq \F_2^7$ be the vector space generated by
	$$\{(1,1,1,1,0,0,0),(1,1,1,0,1,0,0),(1,1,0,0,0,1,1)\}.$$

Let $W \subseteq \F_2^2$ be $\{(0,0),(1,1)\}$.

Define $\phi_1: V_1 \to W$ to be the map that  makes two copies of the first coordinate.

Define $\phi_2: V_2 \to W$ to be the projection onto the first two coordinates of $V_2$.

The projective multiset of weight changes for each map is $\{2,2,2,2,4,4,4\}$.  However, $\phi_1$ cannot be a coordinate projection since there is no pair of coordinates where the elements of $V_1$ are always equal. On the other hand, $\phi_2$ is explicitly a coordinate projection.
\end{example}

We have two main results that help determine which multisets are projection-forcing.  The first result gives a property that characterizes these multisets.  It is time consuming to determine if a given multiset has this property.  It requires the following matrix.

\begin{definition}[$M_{k,q}$]
	For a nonnegative integer $k$ and a prime power $q$, define the real-valued matrix $M_{k,q}$ to be the complement of the incidence matrix of the design formed by hyperplanes and points in $PG(k-1,q)$.
\end{definition}

We will give a more explicit construction of $M_{k,q}$ in the proof of the following theorem.

\begin{theorem}
	\label{main2}
Let $k$ be a nonnegative integer, and $q$ a prime power.  A multiset $S$ of size $(q^{k} - 1)/(q - 1)$ is projection-forcing if and only if for each vector $\pi$, a permutation of $S$, either $M_{k,q}^{-1} \pi$ is nonnegative or it contains a non-integer entry.
\end{theorem}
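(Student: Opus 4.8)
The plan is to recast the whole question in terms of multisets of points of $PG(k-1,q)$. Fix a $k$-dimensional code $U\subseteq\F^n$ together with a generator matrix $G$ (a $k\times n$ matrix of rank $k$). Discarding zero columns and identifying each remaining column with the point of $PG(k-1,q)$ it spans, $G$ determines a multiset $\mathbf a$ of points, with $\sum_p\mathbf a_p\le n$; since $G$ has rank $k$, $\mathbf a$ spans. For a nonzero $x\in\F^k$ let $x^{\perp}=\{y:x\cdot y=0\}$ be the corresponding hyperplane; the $i$-th coordinate of $xG$ is nonzero exactly when the $i$-th column of $G$ does not lie on $x^{\perp}$, so $w(xG)=\sum_{p\notin x^{\perp}}\mathbf a_p$. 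Since $x\mapsto xG$ induces a bijection $PG(k-1,q)\cong P(U)$ and $x\mapsto x^{\perp}$ is a bijection onto the set of hyperplanes, the projective weight distribution of $U$ is precisely the multiset of entries of $M_{k,q}\,\mathbf a$, where $M_{k,q}$ has rows indexed by hyperplanes, columns indexed by points, and a $1$ in position $(H,p)$ iff $p\notin H$ --- equivalently, $M_{k,q}=J-A$ for $A$ the point--hyperplane incidence matrix. (This is the explicit description promised before the theorem.) A linear map $\phi:U\to V$ amounts to a $k\times m$ matrix $G'$ with $\phi(xG)=xG'$, giving a second point multiset $\mathbf b$; by the same computation the weight change at $[xG]$ is $\bigl(M_{k,q}(\mathbf a-\mathbf b)\bigr)_{x^{\perp}}$, so $\phi$ realizes the multiset of entries of $M_{k,q}(\mathbf a-\mathbf b)$.

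Two facts then drive the proof. First, $M_{k,q}$ is invertible over $\mathbb Q$. Writing $b=\frac{q^k-1}{q-1}$, $r=\frac{q^{k-1}-1}{q-1}$, $\lambda=\frac{q^{k-2}-1}{q-1}$, one has $A\mathbf 1=A^{T}\mathbf 1=r\mathbf 1$ and $A^{T}A=(r-\lambda)I+\lambda J$; if $M_{k,q}\mathbf v=0$ then $A\mathbf v=(\mathbf 1^{T}\mathbf v)\mathbf 1$, and applying $A^{T}$ together with $r-\lambda=q^{k-2}\ne0$ forces $\mathbf v$ to be a scalar multiple of $\mathbf 1$, after which $M_{k,q}\mathbf 1=(b-r)\mathbf 1=q^{k-1}\mathbf 1\ne0$ gives $\mathbf v=0$ (the cases $k\le 1$ being trivial). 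Second, $\phi$ is a projection if and only if $\mathbf b\le\mathbf a$ coordinatewise, i.e.\ $\mathbf a-\mathbf b$ is nonnegative. Indeed, if $\phi$ is multiplication by a matrix $P$ with at most one nonzero entry in each row and column, then $GP=G'$, so each column of $G'$ is a scalar multiple of a distinct column of $G$ (or is $0$), which forces $\mathbf b\le\mathbf a$; conversely, given $\mathbf b\le\mathbf a$ one constructs such a $P$ column by column. Crucially, this criterion depends on $\phi$ only through $\mathbf d:=\mathbf a-\mathbf b$, and by the first fact $\mathbf d=M_{k,q}^{-1}\pi$ is determined by whichever ordering $\pi$ of $S$ the map $\phi$ realizes.

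Finally I record which orderings $\pi$ of $S$ are realized at all. If $M_{k,q}^{-1}\pi$ has a non-integer entry, it cannot be written as $\mathbf a-\mathbf b$ with $\mathbf a,\mathbf b$ nonnegative integer vectors, so no $\phi$ realizes $\pi$. If $\mathbf d:=M_{k,q}^{-1}\pi\in\mathbb Z^{b}$, pick an integer $c$ large enough that every entry of $\mathbf a:=\mathbf d+c\mathbf 1$ is positive, set $\mathbf b:=c\mathbf 1$, and let $G,G'$ be generator matrices whose columns list these point multisets; then $G$ has rank $k$ and the induced $\phi$ realizes $\pi$. Combining everything: $S$ is projection-forcing iff every map realizing some ordering of $S$ is a projection iff every ordering $\pi$ of $S$ that is realized satisfies $M_{k,q}^{-1}\pi\ge0$ iff for every ordering $\pi$ of $S$, either $M_{k,q}^{-1}\pi$ is nonnegative or it has a non-integer entry --- which is the assertion of Theorem~\ref{main2}.

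I expect the delicate points to be the two structural equivalences rather than any single calculation. The first --- that the projective multiset of weight changes is insensitive to the (very non-unique) presentation of $\phi$ and equals the entry-multiset of $M_{k,q}(\mathbf a-\mathbf b)$ --- requires care with zero columns, with codomains whose support is not full, and with the translation between $\F^{\times}$-scaling classes of vectors and points of $PG(k-1,q)$. The second --- the ``$\phi$ is a projection $\iff$ $\mathbf a-\mathbf b$ nonnegative'' dictionary --- is what, combined with the invertibility of $M_{k,q}$, turns an a priori infinite quantifier over all maps realizing $S$ into a finite check over the (finitely many) permutations of $S$, so establishing invertibility of $M_{k,q}$ cleanly is the other load-bearing step.
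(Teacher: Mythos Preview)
Your proof is correct and follows essentially the same architecture as the paper's: translate a linear map to the pair of point-multiplicity vectors $(\mathbf a,\mathbf b)$, observe that the ordered weight-change vector is $M_{k,q}(\mathbf a-\mathbf b)$, use invertibility of $M_{k,q}$ to recover $\mathbf a-\mathbf b$ from any realized ordering $\pi$, and characterize ``projection'' as $\mathbf a-\mathbf b\ge 0$. The paper's write-up is terser --- it asserts without proof that nonnegative multiplicity differences are equivalent to being a projection, and for the converse direction simply states that an integral $M_{k,q}^{-1}\pi$ with a negative entry ``consists of the multiplicity differences for a linear function'' --- whereas you actually supply both the equivalence and the explicit $\mathbf a=\mathbf d+c\mathbf 1$, $\mathbf b=c\mathbf 1$ construction. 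Your invertibility argument (kernel computation via $A^{T}A=(r-\lambda)I+\lambda J$) also differs from the paper's, which instead writes down the inverse explicitly in the subsequent Proposition, but this is a minor variation inside the same overall strategy.
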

Equipped with this matrix, one can check in super-exponential time that a set is projection-forcing.

The second of our main results gives a property that can be checked quickly and that implies, but is not equivalent to, projection-forcing.  It uses a technical, but easily-computed parameter $\delta_q(S)$ that we call the split difference.  In the binary case the split difference is simply the sum of the smallest $2^{k-1}$ elements of $S$ minus the sum of the largest $2^{k-1}-1$ elements of $S$, hence the name.  We put off the general definition until Section \ref{sec:qary}.

\begin{theorem}
	\label{main1}
  If $S$ is a multiset of size $(q^{k} - 1)/(q - 1)$ and $\delta_q(S) > -q^{k-1}$ then $S$ is $q$-projection-forcing.
\end{theorem}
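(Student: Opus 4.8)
The plan is to derive Theorem~\ref{main1} from the characterization in Theorem~\ref{main2}, using an explicit formula for the inverse matrix $M_{k,q}^{-1}$. Throughout, write $v = (q^{k}-1)/(q-1)$ for the number of points (equivalently, hyperplanes) of $PG(k-1,q)$ and $r = (q^{k-1}-1)/(q-1)$ for the number of points on a hyperplane, which by duality is also the number of hyperplanes through a point. Index the rows of $M_{k,q}$ by hyperplanes $H$ and its columns by points $P$, so $(M_{k,q})_{H,P}=1$ when $P\notin H$ and $0$ otherwise. By Theorem~\ref{main2} it suffices to show that for every permutation $\pi$ of $S$, either $M_{k,q}^{-1}\pi$ is entrywise nonnegative or it has a non-integer entry; I will prove the stronger statement that \emph{each individual coordinate} of $M_{k,q}^{-1}\pi$ is either nonnegative or not an integer (if some coordinate is negative it must then be non-integral, so this implies the disjunction in Theorem~\ref{main2}).

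The first step is to pin down $M_{k,q}^{-1}$. Since points and hyperplanes of $PG(k-1,q)$ form a symmetric $2$-design, a direct incidence count gives
$$M_{k,q}M_{k,q}^{T} = q^{k-2}\bigl(I + (q-1)J\bigr),$$
where $J$ is the all-ones matrix. Inverting $I+(q-1)J$ by the Sherman--Morrison formula (using $1+(q-1)v = q^{k}$) and simplifying then yields the clean identity
$$q^{k-1}\,M_{k,q}^{-1} = q\,M_{k,q}^{T} - (q-1)\,J .$$
The $P$-th row of $M_{k,q}^{T}$, as a vector indexed by hyperplanes, is $\mathbf{1} - \chi_P$ where $\chi_P(H)=1$ iff $P\in H$; hence, writing $\sigma = \sum_{s\in S} s$ for the sum of the multiset,
$$\bigl(M_{k,q}^{-1}\pi\bigr)_P = \frac{\sigma \;-\; q\sum_{H \ni P}\pi_H}{q^{k-1}} .$$

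The second step is a monotonicity observation. For a fixed point $P$ there are exactly $r$ hyperplanes through it, and as $\pi$ ranges over all permutations of $S$ the sub-multiset $\{\pi_H : H\ni P\}$ ranges over \emph{all} size-$r$ sub-multisets of $S$ (place whichever $r$ entries one likes on those $r$ hyperplanes). Thus $\sum_{H\ni P}\pi_H \le T$, where $T$ is the sum of the $r$ largest elements of $S$, and therefore
$$\bigl(M_{k,q}^{-1}\pi\bigr)_P \;\ge\; \frac{\sigma - qT}{q^{k-1}} \;=\; \frac{\delta_q(S)}{q^{k-1}} ,$$
where one uses that the split difference $\delta_q(S)$ of Section~\ref{sec:qary} is exactly $\sigma - qT$ (sanity check: for $q=2$, $T$ is the sum of the largest $2^{k-1}-1$ entries and $\sigma - T$ the sum of the smallest $2^{k-1}$, so $\sigma - 2T$ is the ``sum of smallest minus sum of largest'' from the introduction). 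Now the hypothesis $\delta_q(S) > -q^{k-1}$ gives $\bigl(M_{k,q}^{-1}\pi\bigr)_P > -1$ for every $P$ and every $\pi$, so each coordinate of $M_{k,q}^{-1}\pi$ is either $\ge 0$ or lies strictly in $(-1,0)$ and is hence non-integral. In either case Theorem~\ref{main2} applies and $S$ is $q$-projection-forcing.

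I expect the main obstacle to be the first step: extracting the explicit inverse $q^{k-1}M_{k,q}^{-1} = q M_{k,q}^{T} - (q-1)J$ with correct bookkeeping of $v$, $r$, and the design parameters (the identity $M_{k,q}M_{k,q}^{T} = q^{k-2}(I+(q-1)J)$ and the rank-one correction), since everything downstream is a one-line estimate. A secondary matter requiring care is to confirm that the quantity $\sigma - q\cdot(\text{sum of the }r\text{ largest entries of }S)$ appearing here really is the split difference $\delta_q(S)$ as defined in Section~\ref{sec:qary}, and to note that the constant $-q^{k-1}$ is sharp: for $S=\{2,2,2,2,4,4,4\}$ (with $q=2$, $k=3$) one has $\delta_2(S) = -4 = -q^{k-1}$, and this $S$ fails to be projection-forcing by Example~\ref{example:sameSet}.
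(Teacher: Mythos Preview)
Your proposal is correct and follows essentially the same route as the paper: derive the explicit inverse $M_{k,q}^{-1}=\frac{1}{q^{k-1}}(qM_{k,q}^{T}-(q-1)J)$, observe that every entry of $M_{k,q}^{-1}\pi$ is at least $\delta_q(S)/q^{k-1}>-1$, and then invoke Theorem~\ref{main2}. The only cosmetic difference is that you obtain the inverse via $M_{k,q}M_{k,q}^{T}=q^{k-2}(I+(q-1)J)$ and a rank-one correction, whereas the paper simply multiplies the claimed inverse against $M_{k,q}$ and checks that the product is the identity (Proposition~\ref{prop:q-ary M inverse}); both arguments are straightforward.
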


We will see that this second theorem is a generalization of Brown Kramer's theorem, and hence the MacWilliams Extension Theorem.

In Section \ref{sec:qary}, we prove our main results.  We conclude in Section \ref{sec:Other} by giving miscellaneous results that potentially give insight into a full characterization of those sets that force projections.

\section{Proof of the Main Results}\label{sec:qary}

First we give some notation.  Let $q$ be a prime power, and let $k$ be a positive integer.  Define $G_{k,q}$ to be a matrix whose columns are representatives of the elements of $PG(k-1,q)$.  Define $\cS_{k,q}$ to be the row space of $G_{k,q}$.  Incidentally, the code $\cS_{k,q}$ is the \define{q-ary simplex code of dimension $k$}, the dual of a Hamming code.  For more on these topics see, for example, \cite{hp2003}.

We are now ready to prove Theorem \ref{main2}.

\begin{proof}[Proof of Theorem \ref{main2}]
Define $M_{k,q}$ to be the complement of the incidence matrix of the design formed by hyperplanes and points in $PG(k-1,q)$.  That is to say, index the rows and columns of $M_{k,q}$ by elements of $PG(k-1,q)$; given $v,w \in \F_q\mathbb{P}^k$, the $(v,w)^{th}$ entry of $M$ is 0 if $v \cdot w = 0$ and 1 otherwise.  As an alternative definition, let the rows of $M_{k,q}$ be the vectors of $P(\cS_{k,q})$ and change nonzero entries to real 1s and 0 entries to real 0s.  Notice that by the first construction, $M_{k,q}$ can be taken to be symmetric, but that we don't assume this symmetry in this paper.  We will show in Proposition \ref{prop:q-ary M inverse} that $M_{k,q}$ is invertible.

Suppose that $S$ is a multiset such that for every permutation $\pi$ of $S$, either $M_{k,q}^{-1}\pi$ is nonnegative, or it contains a non-integer.  Let $\phi : V \to W$ be a linear map that realizes $S$.  We want to show that $\phi$ is a projection.

Let $B$ be a generator matrix for $V$: its rows, $v_1,v_2,\ldots,v_k$ form a basis for $V$.  Given a point $p$ of $PG(k-1,q)$, define the \define{multiplicity of $p$ with respect to $v_1,v_2,\ldots,v_k$} to be the number of columns, $c$, of $B$ such that $c \sim p$.  Let $R$ be the vector of multiplicities, indexed by elements of $PG(k-1,q)$ (in the same order as for $M_{k,q}$).  Notice that $M_{k,q}R$ is the projective weight distribution of $V$.

Define $Q$ to be the vector of multiplicities with respect to $\phi(v_1),\ldots, \phi(v_k).$
Define $\pi = M_{k,q}R - M_{k,q}Q$.  Notice that $\pi$ is a permutation of $S$.  But $M_{k,q}^{-1}\pi = R - Q$ consists of integers, so by our choice of $S$, it is nonnegative.  This tells us that no multiplicity has increased, so $\phi$ is a projection.

Conversely, suppose that there is a permutation $\pi$ of $S$ for which $M_{k,q}^{-1} \pi$ consists of all integers, some of which are negative.  Then $M_{k,q}^{-1}\pi$ consists of the multiplicity differences for a linear function that realizes $S$. Since some of these differences are negative, the function is not a projection.
\end{proof}

It should be noted that Bogart, Goldberg, and Gordon \cite{bgg1978} gave a proof of the MacWilliams extension theorem by establishing the invertibility of $M_{k,q}$.
We will prove Theorem \ref{main1} by explicitly constructing an inverse for $M_{k,q}$ and using its structure.  The following formula is probably not a new result.

\begin{proposition}\label{prop:q-ary M inverse}
  The inverse of $M_{k,q}$ is
    $$\frac{1}{q^{k-1}}(qM_{k,q}^T - (q - 1)J),$$
  where $J$ is the all ones matrix.
\end{proposition}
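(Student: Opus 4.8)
The plan is to verify directly that the proposed matrix is a two-sided inverse of $M_{k,q}$ by computing the product $M_{k,q}\bigl(qM_{k,q}^T - (q-1)J\bigr)$ and checking it equals $q^{k-1}I$. Since $M_{k,q}$ can be taken symmetric, it suffices to analyze $q\, M_{k,q}M_{k,q}^T - (q-1) M_{k,q} J$, and then combinatorially identify the entries. The key observation is that both factors are $\{0,1\}$-matrices built from the hyperplane-point incidence structure of $PG(k-1,q)$: writing $N$ for the incidence matrix of hyperplanes versus points, we have $M_{k,q} = J - N$, so everything reduces to well-known identities about $N$.

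First I would record the standard incidence facts in $PG(k-1,q)$: there are $v := (q^k-1)/(q-1)$ points and $v$ hyperplanes; each hyperplane contains $r := (q^{k-1}-1)/(q-1)$ points; each point lies on $r$ hyperplanes; and any two distinct points lie on exactly $\lambda := (q^{k-2}-1)/(q-1)$ common hyperplanes. These give $NN^T = (r-\lambda)I + \lambda J$ and $NJ = JN = rJ$. Then I would expand $M_{k,q}M_{k,q}^T = (J-N)(J-N^T) = vJ - NJ - JN^T + NN^T = vJ - 2rJ + (r-\lambda)I + \lambda J$. Collecting coefficients, the $J$-coefficient is $v - 2r + \lambda$, which one checks telescopes to exactly $1$ (since $v - r = q^{k-1}$ and $r - \lambda = q^{k-2}$, so $v - 2r + \lambda = q^{k-1} - q^{k-2} = q^{k-2}(q-1)$ — wait, I would need to be careful here and recompute: actually $v - 2r + \lambda = (v-r) - (r - \lambda) = q^{k-1} - q^{k-2}$), and the $I$-coefficient is $r - \lambda = q^{k-2}$. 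So $M_{k,q}M_{k,q}^T = q^{k-2}I + q^{k-2}(q-1)J$. Next, $M_{k,q}J = (J - N)J = vJ - rJ = (v-r)J = q^{k-1}J$.

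Combining, $q\,M_{k,q}M_{k,q}^T - (q-1)M_{k,q}J = q^{k-1}I + q^{k-1}(q-1)J - (q-1)q^{k-1}J = q^{k-1}I$, which is exactly what is needed; dividing by $q^{k-1}$ gives the claimed inverse. As a by-product this also establishes that $M_{k,q}$ is invertible, fulfilling the forward reference in the proof of Theorem \ref{main2}. The main obstacle is purely bookkeeping: getting the three parameters $v, r, \lambda$ and their differences right so that the $J$-terms cancel cleanly, and confirming that the identity is genuinely two-sided (which follows here since $M_{k,q}$ is square and we have exhibited a right inverse, or symmetrically since the transpose of the computation handles the left inverse). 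If one prefers not to invoke symmetry of $M_{k,q}$, I would instead observe that the same incidence identities give $N^T N = (r-\lambda)I + \lambda J$ as well — true because $PG(k-1,q)$ is a symmetric design — so the computation of $M_{k,q}^T M_{k,q}$ is identical, and both one-sided inverses coincide.
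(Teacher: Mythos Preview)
Your argument is correct and reaches the same conclusion as the paper. The underlying combinatorics is identical: each row of $M_{k,q}$ has $q^{k-1}$ ones (the number of points off a given hyperplane), and any two distinct rows share $q^{k-1}-q^{k-2}$ ones (points off both of two distinct hyperplanes). The paper uses exactly these two counts to compute the $(i,j)$ entry of $M_{k,q}M'$ directly, distinguishing the cases $i=j$ and $i\neq j$.

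Where you differ is only in packaging. You introduce the incidence matrix $N$, write $M_{k,q}=J-N$, and invoke the standard symmetric-design identity $NN^T=(r-\lambda)I+\lambda J$ together with $NJ=rJ$, then expand algebraically. The paper instead performs the equivalent entry-by-entry count without naming $N$ or the parameters $v,r,\lambda$. Your route makes the argument transparently an instance of the general fact that the complement of the incidence matrix of any symmetric $2$-design has an inverse of this shape, at the cost of importing a little design-theory vocabulary; the paper's route is slightly more self-contained but less structural. Both are equally short, and your bookkeeping with $v-r=q^{k-1}$, $r-\lambda=q^{k-2}$, and $v-2r+\lambda=q^{k-2}(q-1)$ is correct.
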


\begin{proof}Call this alleged inverse $M'$.  Notice that $M'$ comes from $M_{k,q}^{T}$ by replacing each 1 with $1/q^{k-1}$ and each 0 with $-(q-1)/q^{k-1}$. Let $(i,j)$ index an entry of $M_{k,q} M'$. Every element of $\cS_{q,k}$ has Hamming weight $q^{k-1}$.
Thus if $i = j$, then the $(i,j)^{th}$ entry of $M_{k,q} M'$ is $q^{k-1}/q^{k-1} = 1$.

Now consider $i \neq j$.  We claim that the ones in row $i$ and row $j$ of $M_{k,q}$ overlap at $q^{k-1} - q^{k - 2}$ positions.  This is true because this overlap is the number of points of $PG(k - 1,q)$ not in the union of two hyperplanes.  Thus the $(i,j)^{th}$ entry of $M_{k,q} M'$ is $(q^{k-1} - q^{k - 2}) /q^{k-1} - (q^{k-1} - (q^{k-1} - q^{k - 2}))(q - 1)/q^{k-1} = 0.$

\end{proof}

We're nearly ready to prove Theorem \ref{main1} as a corollary of Proposition \ref{prop:q-ary M inverse}.  First we need to give the full definition of split difference.  The idea is to consider the smallest value possible in any entry of $M_{k,q}^{-1} \pi$, where $\pi$ is a permutation of $S$.  If this is positive, then Theorem \ref{main2} proves that $S$ is projection-forcing.  In light of the explicit structure of $M_{k,q}^{-1}$ given in Proposition \ref{prop:q-ary M inverse}, it is easy to compute this smallest value.  We almost define the split difference to be this number, but for aesthetic purposes we scale by $q^{k-1}$.

\begin{definition}[split difference]\label{def:qsplit}
Let $S = \{s_1,\dots, s_{(q^{k} - 1)/(q - 1)}\}$ be a multiset where $s_i \leq
s_{i+1}$ for all $i$.  Define the \define{$q$-ary split difference}, $\delta_q(S)$, of $S$ to be
  $$\delta_q(S) = \sum_{i=1}^{q^{k-1}} s_i - \left[(q-1)\sum_{i=q^{k-1}+1}^{(q^k - 1)/(q-1)} s_i\right].$$
\end{definition}

We can now prove Theorem \ref{main1}.

\begin{proof}[Proof of Theorem \ref{main1}]

Suppose $\delta_q(S) > -q^{k-1}$.
The smallest an entry of $M_{k,q}^{-1} \pi$ can be,
where $\pi$ is a permutation of $S$, is $\delta_q(S)/q^{k - 1} > -1$.  Thus if $M_{k,q}^{-1} \pi$ consists of
integers, it is nonnegative.  By Theorem \ref{main2}, $S$ is $q$-projection-forcing.

\end{proof}

The Brown Kramer and MacWilliams results follow:

\begin{corollary}

  If $S = \{c,c,\ldots,c\}$, where $c$ is positive, then $S$ is projection-forcing.

\end{corollary}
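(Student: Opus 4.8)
The plan is to deduce this directly from Theorem \ref{main1}: all that is needed is to compute the split difference $\delta_q(S)$ of a constant multiset and observe that it is positive, hence certainly greater than $-q^{k-1}$. So the argument has essentially no moving parts beyond a geometric-series computation; the real work was already done in Theorem \ref{main1} and Proposition \ref{prop:q-ary M inverse}.

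Concretely, I would first record that $\sizeof{S} = (q^k - 1)/(q-1) = 1 + q + \cdots + q^{k-1}$, so that in Definition \ref{def:qsplit} the head sum $\sum_{i=1}^{q^{k-1}} s_i$ has $q^{k-1}$ terms while the tail sum $\sum_{i=q^{k-1}+1}^{(q^k-1)/(q-1)} s_i$ has
\[
\frac{q^k - 1}{q - 1} - q^{k-1} = 1 + q + \cdots + q^{k-2} = \frac{q^{k-1} - 1}{q - 1}
\]
terms. Since every $s_i$ equals $c$, the head sum is $q^{k-1} c$ and the tail sum is $\frac{q^{k-1}-1}{q-1}\, c$, so
\[
\delta_q(S) = q^{k-1} c - (q-1)\cdot\frac{q^{k-1}-1}{q-1}\, c = q^{k-1} c - (q^{k-1} - 1) c = c.
\]
As $c$ is positive, $\delta_q(S) = c > 0 > -q^{k-1}$, and Theorem \ref{main1} yields that $S$ is $q$-projection-forcing. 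The one point that merits a moment of care is the bookkeeping in the displayed identity $\frac{q^k-1}{q-1} - q^{k-1} = \frac{q^{k-1}-1}{q-1}$, i.e. correctly counting the terms in each of the two sums; everything else is immediate.

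Finally I would note how the older results fall out. Any positive constant $c$ is covered, which is the non-degenerate case of Brown Kramer's theorem quoted in the introduction; and the identical computation with $c = 0$ gives $\delta_q(S) = 0 > -q^{k-1}$ as well, recovering the MacWilliams Extension Theorem once one observes that a weight-preserving projection of a $k$-dimensional code must in fact be a monomial equivalence. There is no genuine obstacle in any of this — the proof is a one-line application of Theorem \ref{main1} together with the summation above.
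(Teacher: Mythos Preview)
Your proof is correct and is essentially identical to the paper's own argument: both compute $\delta_q(S) = cq^{k-1} - (q-1)\cdot\frac{q^{k-1}-1}{q-1}\,c = c$ and then invoke Theorem~\ref{main1}. Your version simply spells out the term-counting more explicitly and adds the (correct) remark that the same computation with $c=0$ also satisfies the hypothesis of Theorem~\ref{main1}, recovering MacWilliams.
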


\begin{proof}

The $q$-ary split difference in this case is

        $$\delta_q(S) = cq^{k-1} - (q -
        1)\left(c\frac{q^{k-1} - 1}{q - 1}\right) =
        c \geq 0 > -q^{k - 1}.$$

\end{proof}

As a quick example of a realizable projection-forcing multiset
not covered by the MacWilliams or Brown Kramer results, consider $S = \set{3,3,3,4,4,4,7}$.  By Theorem \ref{main1}, $S$ is 2-projection-forcing, since $\delta_2(S) = 3+3+3+4-4-4-7=-2 > -2^{3 - 1}$.

As the next example shows, not all projection-forcing sets have the property from Theorem \ref{main1}:

\begin{example}

Consider $S = \{2,2,2,3,5,5,5\}$.  This is realized by many projections.  For instance, the projection onto the last coordinate of the space generated by the vectors $$\{(1,1,0,0,0,0,0),(1,0,1,0,0,0,0),(0,0,0,1,1,1,0)\}.$$  Although $\delta_2(S) = 2+2+2+3-5-5-5 = -6 \leq -2^{3-1}$, we claim $S$ is still projection-forcing.  We leave it to the reader to verify that if $S$ is the multiset of weight changes of a linear map, then the vector of changes in multiplicities must be some reordering of
$(0,0,0,1,1,1,3)$.  This statement can be checked via computer.  

Using a very short Mathematica program that uses Theorem \ref{main1}, we have verified that there are 58 projection-forcing multisets for $3$-dimensional binary vector spaces with weight changes at most 7.  Among these, the following 8 are the ones which are not caught by Theorem \ref{main1}: \\
$\{2,2,2,3,5,5,5\}$, $\{2,2,2,5,5,5,7\}$, $\{2,2,2,5,7,7,7\}$, $\{2,2,4,7,7,7,7\}$,\\
$\{2,3,3,3,5,6,6\}$, $\{2,4,4,5,7,7,7\}$, $\{3,3,3,4,6,6,7\}$, $\{3,4,4,4,7,7,7\}$.

\end{example}

\section{Other results}\label{sec:Other}

Brown Kramer's result handles the case when $S$ is constant.  In this section we deal with the binary case of some almost-constant multisets.  This might give insight into a characterization that is checked more efficiently than the one given in Theorem \ref{main2}.
First we consider the case where there is one discrepancy from being constant.  We determine the realizable multisets $S$ and then the projection-forcing realizable multisets $S$.

\begin{lemma}\label{lem:linearmapab}

Let $a$ and $b$ be nonnegative integers, and let $k \geq 2$.  Let $S$
consist of $2^k - 2$ copies of $a$ and one copy of $b$.  Then $S$ is
realized by some binary linear map if and only if $a \equiv 0 \mod
2^{k-2}$ and $b \equiv 0 \mod 2^{k-1}$.

\end{lemma}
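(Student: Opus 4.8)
The plan is to translate the realizability condition into a statement about the existence of a nonnegative integer vector $R$ of point-multiplicities in $PG(k-1,2)$ whose associated weight distribution can be "thinned" to produce the prescribed multiset of weight changes. Concretely, by the argument in the proof of Theorem \ref{main2}, $S$ is realized by some binary linear map $\phi : V \to W$ if and only if there exist nonnegative integer vectors $R$ and $Q$, indexed by the points of $PG(k-1,2)$, with $R - Q$ nonnegative, such that the multiset of entries of $M_{k,2}(R - Q)$ equals $S$. Writing $D = R - Q$, realizability of $S$ is equivalent to: there is a nonnegative integer vector $D$ with $M_{k,2} D$ a permutation of $S$ (the vectors $R$ and $Q$ can then always be recovered, e.g. $Q = 0$, $R = D$). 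So the task reduces to characterizing which $D \ge 0$ give $M_{k,2} D$ equal to "$2^k-2$ copies of $a$ and one copy of $b$."

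For the "only if" direction, I would use the explicit inverse from Proposition \ref{prop:q-ary M inverse}: with $q = 2$ we have $M_{k,2}^{-1} = \frac{1}{2^{k-1}}(2 M_{k,2}^T - J)$. If $\pi$ is the permutation of $S$ realized as $M_{k,2} D$, then $D = M_{k,2}^{-1}\pi = \frac{1}{2^{k-1}}(2 M_{k,2}^T \pi - J\pi)$ must be an integer vector. Now $J\pi$ is the constant vector whose entries all equal the sum $\sigma := (2^k - 2)a + b$ of the elements of $S$. Each entry of $2 M_{k,2}^T \pi$ is $2$ times a sum of a subset of the $s_i$; modulo $2^{k-1}$ the terms $a$ contribute $2a$ times (number of relevant hyperplanes through a point), and I expect the combinatorics of hyperplanes in $PG(k-1,2)$ — each point lies on $2^{k-1}-1$ hyperplanes, two points lie on $2^{k-2}-1$ common hyperplanes — to force, after subtracting $\sigma$ and dividing by $2^{k-1}$, the stated congruences $a \equiv 0 \pmod{2^{k-2}}$ and $b \equiv 0 \pmod{2^{k-1}}$. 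The cleanest route is probably to compare two entries of $D$: the entry of $M_{k,2}^T\pi$ indexed by a point where the $b$-coordinate of $\pi$ "sits on" versus "misses" a hyperplane differs by a controlled amount, and integrality of both forces $2b \equiv 0 \pmod{2^{k-1}}$, i.e. $b \equiv 0 \pmod{2^{k-2}}$; sharpening this to $2^{k-1}$ and getting the $a$-congruence will require tracking the global sum condition as well.

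For the "if" direction, assuming $a = 2^{k-2}a'$ and $b = 2^{k-1}b'$, I would exhibit an explicit nonnegative integer $D$. A natural candidate: take $D$ to be a constant $c$ on all points plus a small perturbation supported on the complement of a single hyperplane $H$ (the set of points off $H$ has size $2^{k-1}$, and $M_{k,2}$ applied to its indicator is the weight distribution of the code, giving weight $2^{k-2}$ on "most" rows and weight $0$ or $2^{k-1}$ on the special rows). The idea is that adding the all-ones vector $\vec 1$ to $D$ shifts every entry of $M_{k,2}D$ by $2^{k-1}$ (the common weight of the simplex code, cf. the proof of Proposition \ref{prop:q-ary M inverse}), so one has a lot of freedom; and adding the indicator of the off-$H$ points shifts $2^k - 2$ of the entries by $2^{k-2}$ and the remaining one by either $0$ or $2^{k-1}$. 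Combining integer multiples of these two moves starting from $D = 0$ should realize exactly the multisets "$2^k - 2$ copies of $a$, one copy of $b$" with $a \equiv 0 \pmod{2^{k-2}}$ and $b \equiv 0 \pmod{2^{k-1}}$, once one checks the resulting $D$ can be taken nonnegative (absorbing any negativity by first adding a large multiple of $\vec 1$).

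The main obstacle I anticipate is the "only if" direction: extracting the two distinct moduli $2^{k-2}$ and $2^{k-1}$ cleanly from the integrality of $M_{k,2}^{-1}\pi$ will require a careful count of how a multiset that is "$(2^k-2)$-fold constant plus one outlier" interacts with the hyperplane–point incidences of $PG(k-1,2)$ — in particular pinning down exactly which rows of $M_{k,2}^T$ do or do not "see" the outlier coordinate, and why the asymmetry between $2^{k-2}$ and $2^{k-1}$ (essentially the difference between $q^{k-1}-q^{k-2}$ and $q^{k-1}$ for $q=2$) emerges. The "if" direction should be routine once the right two generating vectors are identified.
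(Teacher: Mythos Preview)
Your reduction at the outset is wrong, and this is the main gap. You claim that $S$ is realized by a linear map iff there are nonnegative integer vectors $R,Q$ with $R-Q$ \emph{nonnegative} and $M_{k,2}(R-Q)$ a permutation of $S$. But $R-Q\ge 0$ is precisely the condition that $\phi$ be a \emph{projection}; a general linear map only needs $R,Q\ge 0$ separately, and then $D=R-Q$ can be any integer vector. So the correct equivalence is: $S$ is realizable iff some permutation $\pi$ of $S$ has $M_{k,2}^{-1}\pi$ integral (with no sign constraint). Your ``absorb negativity by adding a large multiple of $\vec 1$'' cannot repair this, since adding $\vec 1$ to $D$ shifts every entry of $M_{k,2}D$ by $2^{k-1}$ and hence changes the target multiset. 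Concretely, for $k=2$, $a=0$, $b=2$ the multiset $S=\{0,0,2\}$ satisfies the stated congruences and is realized by $\phi:\F_2^2\to\F_2$, $\phi(x,y)=x+y$, yet every $M_{2,2}^{-1}\pi$ has a negative entry, so no $D\ge 0$ exists.

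Once you drop the spurious sign constraint, the argument is much shorter than you anticipate, and matches the paper's. Using $M_{k,2}^{-1}=\frac{1}{2^{k-1}}(2M_{k,2}^T-J)$, a direct count (each column of $M_{k,2}$ has exactly $2^{k-1}$ ones) shows that for \emph{every} permutation $\pi$ of $S$, each entry of $M_{k,2}^{-1}\pi$ equals either $b/2^{k-1}$ or $(2a-b)/2^{k-1}$, and both values occur. Hence $S$ is realizable iff both numbers are integers, i.e.\ $2^{k-1}\mid b$ and $2^{k-1}\mid 2a-b$, which is exactly $2^{k-1}\mid b$ and $2^{k-2}\mid a$. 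Note in particular that your tentative ``comparing two entries forces $2b\equiv 0\pmod{2^{k-1}}$'' undershoots: one of the entries is literally $b/2^{k-1}$, so integrality gives $2^{k-1}\mid b$ immediately, with no sharpening step needed.
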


\begin{proof}

Let $S$ be realized by some binary linear map. Let $M_{k,2}$ be the matrix as
defined in Section \ref{sec:qary}.  Let $\pi$ be a permutation of
$S$.  Every entry of $M_{k,2}^{-1} \pi$ is either $b/2^{k-1}$ or
$(2a - b)/2^{k-1}$. Furthermore, both values appear in
$M_{k,2}^{-1}\pi$ at least once.  Since one of these permutations corresponds to
a map, these numbers must be integers.  Thus the conditions on parity
hold.

Conversely, if the parity conditions hold, then let $\pi$ be a permutation
of $S$.  $M_{k,2}^{-1} \pi$ consists of integers, so we may use this product to
construct a linear map that realizes $S$.

\end{proof}

\begin{proposition}

Let $a$ and $b$ be nonnegative integers, and let $k \geq 2$.  Let $S$
consist of $2^k - 2$ copies of $a$ and one copy of $b$.  Suppose $S$ is realized by an $\F_2$-linear map.  Then the following are
equivalent:
\begin{enumerate}

\item $S$ is $2$-projection-forcing,
\item $b \leq 2a$,
\item $b < 2a+2^{k-1}$.

\end{enumerate}

\end{proposition}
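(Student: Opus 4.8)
The plan is to prove $(1)\Leftrightarrow(2)$ using Theorem~\ref{main2}, and $(2)\Leftrightarrow(3)$ by a short divisibility argument that rests on the realizability hypothesis.

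For $(1)\Leftrightarrow(2)$ I would reuse the computation already made in the proof of Lemma~\ref{lem:linearmapab}: for any permutation $\pi$ of $S$, every entry of $M_{k,2}^{-1}\pi$ is either $b/2^{k-1}$ or $(2a-b)/2^{k-1}$, and (since $k\ge 2$) both values actually occur. Because $S$ is realizable, Lemma~\ref{lem:linearmapab} also tells us $2^{k-1}\mid b$ and $2^{k-2}\mid a$, so both of these values are integers; hence the ``contains a non-integer entry'' alternative in the criterion of Theorem~\ref{main2} never applies to any $\pi$. Therefore $S$ is projection-forcing if and only if $M_{k,2}^{-1}\pi$ is nonnegative for every $\pi$. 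Since $b\ge 0$ the entry $b/2^{k-1}$ causes no trouble, so this holds precisely when $(2a-b)/2^{k-1}\ge 0$, i.e. when $b\le 2a$. This gives $(1)\Leftrightarrow(2)$.

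The implication $(2)\Rightarrow(3)$ is trivial, as $2^{k-1}>0$. For $(3)\Rightarrow(2)$ I would invoke realizability once more: from Lemma~\ref{lem:linearmapab}, $b$ is a multiple of $2^{k-1}$, and since $2^{k-2}\mid a$ we have $2^{k-1}\mid 2a$. Thus $b$ and $2a$ are both multiples of $2^{k-1}$, so if $b>2a$ then in fact $b\ge 2a+2^{k-1}$, contradicting $(3)$; hence $b\le 2a$.

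I do not expect a genuine obstacle here; the argument is essentially bookkeeping layered on top of Lemma~\ref{lem:linearmapab} and Proposition~\ref{prop:q-ary M inverse}. The one subtlety worth flagging is that realizability is used in two distinct roles — first to discard the non-integer alternative in Theorem~\ref{main2}, and second to promote the strict inequality $b<2a+2^{k-1}$ of $(3)$ to the weak inequality $b\le 2a$ of $(2)$ — and that, for the former, one also needs that both values $b/2^{k-1}$ and $(2a-b)/2^{k-1}$ genuinely appear among the entries of $M_{k,2}^{-1}\pi$, which is exactly where the hypothesis $k\ge 2$ is needed.
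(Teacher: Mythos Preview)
Your proof is correct and very close to the paper's. The $(2)\Leftrightarrow(3)$ argument is identical, and your $(1)\Rightarrow(2)$ matches the paper's exactly. The only difference is in how $(2)\Rightarrow(1)$ (or in the paper's ordering, $(3)\Rightarrow(1)$) is obtained: the paper computes the split difference $\delta_2(S)$ and invokes Theorem~\ref{main1}, while you instead use Theorem~\ref{main2} directly, observing that once all entries of $M_{k,2}^{-1}\pi$ are known to be the integers $b/2^{k-1}$ and $(2a-b)/2^{k-1}$, nonnegativity for every $\pi$ is exactly $b\le 2a$. Your route is slightly more economical since it avoids the detour through Theorem~\ref{main1}, at the small cost of not illustrating that theorem in action; otherwise the two arguments are the same.
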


\begin{proof}

Clearly, (2) implies (3).  Proposition \ref{lem:linearmapab} tells
us that (3) implies (2).  The split difference of $S$ is $b$ when
$0 \leq b \leq a$ and $2a-b$ when $a \leq b$.  If (3) holds, then either $\delta_2(S) = 2a - b > -2^{k-1}$ or $\delta_2(S) = b \geq 0 > -2^{k-1}$.
Thus Theorem \ref{main1} tells us that (3) implies (1).

We finish by proving (1) implies (2).  Suppose $S$ is
projection-forcing.  If $\pi$ is a permutation of $S$ then, as in the
proof of Proposition \ref{lem:linearmapab}, the entries of $M_{k,2}^{-1}
\pi$ are $b/2^{k-1}$ and $(2a - b)/2^{k-1}$.  If $M_{k,2}^{-1}
\pi$ were to contain non-integers, then for every permutation $\pi'$ of $S$,
we have that $M_{k,2}^{-1} \pi'$ contains non-integers. But then $S$ is not the
multiset of weight changes for a linear map.  Thus $M_{k,2}^{-1} \pi$ consists
of all integers.  Since $S$ is projection-forcing, those integers are
non-negative.  In particular, $(2a - b)/2^{k-1} \geq 0$, and
hence $b \leq 2a$.

\end{proof}

Now we consider two discrepancies from being constant.

\begin{lemma}\label{lem:linearmapabc}

Let $k \geq 2$, and let $a$, $b$, and $c$ be nonnegative integers.  If
$S$ consists of $2^k - 3$ copies of $a$ and one each of $b$ and $c$
then $S$ is realized by some binary linear map if and only if $a,b,c
\equiv 0 \mod 2^{k-2}$ and either exactly 1 or exactly 3 of $a,b,c$ are
congruent to $0 \mod 2^{k-1}$.

\end{lemma}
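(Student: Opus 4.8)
The plan is to mimic the structure of the proof of Lemma~\ref{lem:linearmapab}, now carrying along the extra discrepancy. As before, the key object is $M_{k,2}^{-1}\pi$ for $\pi$ a permutation of $S$: by Theorem~\ref{main2}, $S$ is realized by a binary linear map if and only if \emph{some} permutation $\pi$ of $S$ has $M_{k,2}^{-1}\pi$ integral (and then it is automatically a nonnegative integer vector of multiplicity differences, or we can at least build a linear map from it). So the whole statement reduces to a congruence computation: determine for which $(a,b,c)$ at least one permutation of $S$ is carried by $M_{k,2}^{-1}$ to an integer vector. Using the explicit formula $M_{k,2}^{-1} = \frac{1}{2^{k-1}}(2M_{k,2}^T - J)$ from Proposition~\ref{prop:q-ary M inverse}, the $i$th entry of $M_{k,2}^{-1}\pi$ is $\frac{1}{2^{k-1}}\bigl(2\langle \text{row}_i(M_{k,2}^T),\pi\rangle - \sum_j \pi_j\bigr)$. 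Since every column of $M_{k,2}^T$ (equivalently every row of $M_{k,2}$) has support of size $2^{k-1}$, and (as in Proposition~\ref{prop:q-ary M inverse}) any two distinct rows overlap in exactly $2^{k-2}$ positions, the entries $2\langle\cdot,\pi\rangle$ range over sums of $\pi$ over the $2^{k-1}$-subsets of coordinates that are the supports of hyperplane complements.

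First I would reduce to understanding, modulo $2^{k-1}$, the possible values of $2(\text{partial sum of }\pi\text{ over a hyperplane-complement}) - (\text{total})$. With $S$ consisting of $2^k-3$ copies of $a$ and one each of $b$, $c$, the total is $(2^k-3)a + b + c$. A hyperplane complement (support of a row of $M_{k,2}$) has size $2^{k-1}$; depending on whether $b$ and $c$ land inside it, its $\pi$-sum is $2^{k-1}a$, $(2^{k-1}-1)a + b$, $(2^{k-1}-1)a + c$, or $(2^{k-1}-2)a + b + c$. (One must check that all four of these configurations actually occur among the rows of $M_{k,2}$ for $k\ge 2$ — this follows because the two coordinates indexed by $b$ and $c$ are distinct points of $PG(k-1,2)$, and one can count hyperplanes containing/avoiding each; this count is the combinatorial core of the argument.) Plugging into the formula, the four possible entry types of $M_{k,2}^{-1}\pi$, multiplied by $2^{k-1}$, are $-b-c$, $2a-b-c$ shifted appropriately, i.e. $2\cdot(\text{partial sum}) - \text{total}$ gives $2^{k-1}\cdot 2a - (b+c) - (2^k-3)a + \dots$; carrying out this routine simplification one finds the four entry types are $\equiv -(b+c),\ 2a-b-c+\dots$ Rather than grind it here, the point is that integrality of all entries of $M_{k,2}^{-1}\pi$ (which, note, does not depend on the permutation $\pi$, only on the multiset $S$, since all four configurations always appear) becomes the system: $2^{k-1} \mid (b+c + \text{mult of }a)$ for each of the four expressions. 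Disentangling these congruences yields $a,b,c \equiv 0 \pmod{2^{k-2}}$ together with a parity constraint on how many of $a,b,c$ are $\equiv 0 \pmod{2^{k-1}}$.

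The main obstacle — and the step deserving genuine care — is the final parsing of the simultaneous congruences into the clean statement ``either exactly $1$ or exactly $3$ of $a,b,c$ are $\equiv 0 \pmod{2^{k-1}}$.'' Once we know $a,b,c$ are all divisible by $2^{k-2}$, write $a = 2^{k-2}a'$, $b = 2^{k-2}b'$, $c = 2^{k-2}c'$; the remaining integrality conditions become conditions mod $2$ on $a',b',c'$, and the four entry types reduce to linear forms in $a',b',c'$ over $\F_2$. The requirement is that all four of these $\F_2$-linear forms vanish, and one checks by direct enumeration of the $8$ triples $(a',b',c')\in\F_2^3$ that this happens precisely for the four triples with $a'+b'+c'$ in the configurations making ``the number of even values among $a,b,c$'' equal to $1$ or $3$ — equivalently, $a'+b'+c' \equiv$ (a fixed value). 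The forward direction then follows since a realized $S$ forces integrality; the converse follows since integrality of $M_{k,2}^{-1}\pi$ lets us construct a realizing map exactly as in Lemma~\ref{lem:linearmapab}. I would also double-check the small case $k=2$ separately, where $2^{k-2}=1$ imposes no divisibility and one must verify the ``$1$ or $3$ even'' condition by hand on $S = \{a,b,c\}$ with $M_{2,2}$ the $3\times 3$ matrix.
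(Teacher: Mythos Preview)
Your approach is essentially the same as the paper's: compute the possible entries of $M_{k,2}^{-1}\pi$, argue that enough of them actually occur, and reduce integrality to the stated congruences. The paper in fact arrives at exactly the four entry types $\frac{1}{2^{k-1}}(b+c-a)$, $\frac{1}{2^{k-1}}(a+b-c)$, $\frac{1}{2^{k-1}}(a+c-b)$, $\frac{1}{2^{k-1}}(3a-b-c)$ that you gesture at but do not write down, and then does precisely the reduction via $a'=a/2^{k-2}$ etc.\ that you outline.

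One substantive difference worth flagging: to show that the relevant entry types actually occur in $M_{k,2}^{-1}\pi$, the paper invokes the MacWilliams Extension Theorem (arguing that the two columns of $M_{k,2}^{-1}$ hitting $b$ and $c$ come from linearly independent simplex codewords, and hence realize all three nonzero patterns $(0,1),(1,0),(1,1)$ somewhere). Your direct hyperplane count is more elementary and just as effective. Note, however, that your claim that \emph{all four} configurations occur for $k\geq 2$ is false at $k=2$: there the ``neither $b$ nor $c$ in support'' case has multiplicity $2^{k-2}-1=0$, so the entry type $(3a-b-c)/2^{k-1}$ does not appear. The paper sidesteps this by only using the other three types for the forward direction (and the fourth is automatically integral once the first three are, since it is their integer combination), which makes the argument uniform in $k\geq 2$ and removes the need for your separate $k=2$ check.
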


\begin{proof}

Suppose $S$ is realized by some binary linear map.  Let $\pi$ be the
permutation of $S$ associated with this map.  Each entry of $M_{k,2}^{-1} \pi$
is one of:

  $$\frac{1}{2^{k-1}}(b + c - a),~~ \frac{1}{2^{k-1}}(a + b -c),~~
       \frac{1}{2^{k-1}}(a + c - b),~~ \frac{1}{2^{k-1}}(3a - b - c).$$
Consider the columns of $M_{k,2}^{-1}$ that are multiplied by $b$ and $c$ in
the product $M_{k,2}^{-1} \pi$.  These columns come from rows of $M_{k,2}$, which in
turn come from linearly independent elements, $s_1$ and $s_2$ of
$\cS_k$.  Since they are linearly independent, $s_1$ and $s_2$ are the
images of the first two columns of $G_{k,2}$ under some automorphism of
$\cS_k$.  By the MacWilliams extension theorem, that automorphism is a
monomial equivalence.  In particular, for each $v \in
\set{(0,1),(1,0),(1,1)}$ there is some coordinate $i$ where the
$i^{th}$ coordinate of $s_j$ is the $j^{th}$ coordinate of $v$.
Thus $(a + b - c)/2^{k-1}$, $(a + c - b)/2^{k-1}$, and
$(b + c - a)/2^{k-1}$ all
appear in any product $M_{k,2}^{-1} \pi$.  Since $\pi$ corresponds to a linear
map, each of these values is an integer.  Adding the first two values,
we have that $2a/2^{k-1}$ is an integer, so $2^{k-2} | a$.
Similarly, $b,c \equiv 0 \mod 2^{k-2}$.  Define $a' = a/2^{k-2}$, $b' =
b/2^{k-2}$, $c' = c/2^{k-2}$.  Then $(1/2)(a' + c' - b')$ is an
integer, so either exactly 1 of a', b', c' is even or they all are.

Conversely, if the parity conditions hold, then let $\pi$ be a permutation
of $S$.  Since $M_{k,2}^{-1} \pi$ consists of integers, we may use this product to
construct a linear map that realizes $S$.

\end{proof}

\begin{proposition}

Let $a$, $b$, and $c$ be nonnegative integers and let $k \geq 3$.  Let
$S$ consist of $2^k - 3$ copies of $a$ and one each of $b$ and $c$.
If $S$ is realized by some linear map then the following are equivalent:

\begin{enumerate}

\item $S$ is $2$-projection-forcing,
\item the set $\set{a,b,c}$ satisfies the triangle inequality and
$3a-b-c\geq 0$,
\item the following four inequalities hold:
$a < b+c+2^{k-1}$,
$b < a+c+2^{k-1}$,
$c < a+b+2^{k-1}$,
$b+c < 3a+2^{k-1}$.

\end{enumerate}

\end{proposition}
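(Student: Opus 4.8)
The plan is to prove the cycle of implications $(2)\Rightarrow(3)\Rightarrow(1)\Rightarrow(2)$, which parallels the structure of the proof for the one-discrepancy case. The implication $(2)\Rightarrow(3)$ is trivial: each of the four inequalities in (3) is a strict weakening of a corresponding inequality in (2) (the triangle inequalities $a\le b+c$, $b\le a+c$, $c\le a+b$ and the inequality $b+c\le 3a$), obtained by adding the positive quantity $2^{k-1}$ to the larger side.

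For $(3)\Rightarrow(1)$, I would compute the split difference $\delta_2(S)$ and invoke Theorem \ref{main1}. Since $S$ has $2^k-3$ copies of $a$ and single copies of $b$ and $c$, the value of $\delta_2(S)$ depends on where $b$ and $c$ fall relative to $a$ in sorted order; there are a handful of cases depending on how many of $b,c$ are $\le a$ versus $\ge a$. In each case $\delta_2(S)$ works out to an expression like $3a-b-c$, $a+b-c$ (or $a+c-b$), or $b+c-a$ — more precisely, it is always one of the four quantities appearing in the list of possible entries of $M_{k,2}^{-1}\pi$ in the proof of Lemma \ref{lem:linearmapabc}, namely the smallest such quantity realizable by a permutation. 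The four inequalities in (3) say precisely that each of $3a-b-c$, $a+b-c$, $a+c-b$, $b+c-a$ exceeds $-2^{k-1}$, so in every case $\delta_2(S) > -2^{k-1}$, and Theorem \ref{main1} gives projection-forcing.

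For $(1)\Rightarrow(2)$, I would argue contrapositively using Theorem \ref{main2} together with the realizability hypothesis, exactly as in the one-discrepancy proposition. Since $S$ is realized by a linear map, the parity conditions of Lemma \ref{lem:linearmapabc} hold, so for \emph{every} permutation $\pi$ of $S$ the vector $M_{k,2}^{-1}\pi$ consists of integers (each entry being one of the four listed quantities divided by $2^{k-1}$, and the parity conditions make all of these integral). Now if (2) fails, then one of $3a-b-c<0$ or one of the three triangle inequalities fails, say $b+c-a<0$; in the proof of Lemma \ref{lem:linearmapabc} it is shown (via the MacWilliams Extension Theorem applied to the automorphism carrying the first two columns of $G_{k,2}$ to the rows indexed by $b$ and $c$) that the value $(b+c-a)/2^{k-1}$ actually occurs as an entry of $M_{k,2}^{-1}\pi$ for some — indeed every — permutation $\pi$; similarly $(3a-b-c)/2^{k-1}$ occurs. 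So some permutation of $S$ yields an all-integer vector $M_{k,2}^{-1}\pi$ with a negative entry, and by Theorem \ref{main2} $S$ is not projection-forcing.

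The main obstacle is the case analysis in $(3)\Rightarrow(1)$: verifying that the sorted-order formula for $\delta_2(S)$ always reduces to one of the four quantities controlled by (3), across all the sub-cases for the positions of $b$ and $c$. I also need $k\geq 3$ (rather than $k\geq 2$) somewhere — presumably to ensure enough copies of $a$ that the combinatorial identifications in Lemma \ref{lem:linearmapabc} (which require two linearly independent simplex-code words indexing the $b,c$ rows) behave as claimed, and so that the split-difference sums have the claimed ranges; I would check that the boundary case $k=2$ is genuinely excluded and note why.
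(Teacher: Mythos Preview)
Your proposal is correct and follows the paper's proof essentially verbatim: the same cycle $(2)\Rightarrow(3)\Rightarrow(1)\Rightarrow(2)$, the same split-difference case analysis for $(3)\Rightarrow(1)$ via Theorem~\ref{main1}, and the same realizability-forces-integers-then-all-four-values-appear argument for $(1)\Rightarrow(2)$. On your flagged question about $k\geq 3$: it is needed precisely so that the fourth value $(3a-b-c)/2^{k-1}$ actually occurs as an entry of $M_{k,2}^{-1}\pi$ --- that entry arises at a coordinate lying in the intersection of the two hyperplanes indexed by the $b$- and $c$-positions, and that intersection has $2^{k-2}-1$ points, which is positive exactly when $k\geq 3$.
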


\begin{proof}

Clearly, (2) implies (3).  The split difference of $S$ is

$$\delta_2(S) = \begin{cases}
-a+b+c & \mbox{ if } 0 \leq b,c \leq a \\
 a-b+c & \mbox{ if } 0 \leq c \leq a \leq b \\
 a+b-c & \mbox{ if } 0 \leq b \leq a \leq c \\
3a-b-c & \mbox{ if } 0 \leq a \leq b,c.
\end{cases}$$
Thus, Theorem \ref{main1} tells us that (3) implies (1).

We finish by proving (1) implies (2).
Suppose $k \geq 3$ and $S$ is projection-forcing.  If $\pi$ is a
permutation of $S$ then, as in the proof of Proposition
\ref{lem:linearmapabc}, the entries of $M_{k,2}^{-1} \pi$ are
$(b + c - a)/2^{k-1}$, $(a + b - c)/2^{k-1}$,
$(a + c - b)/2^{k-1}$, $(3a - b - c)/2^{k-1}$, and
each of these values appears in $M_{k,2}^{-1}\pi$. If $M_{k,2}^{-1}
\pi$ were to contain non-integers, then for every permutation $\pi'$ of $S$,
we would have that $M_{k,2}^{-1} \pi'$ contains non-integers. But then $S$ would
not be realized by a linear map.  Thus $M_{k,2}^{-1} \pi$
consists of all integers.  Since $S$ is projection-forcing, those
integers are non-negative.  This proves the desired result.

\end{proof}

Using the techniques of this paper, it should be possible to generalize these results to other forms of $S$ and to codes over other finite fields, but the statements quickly become more convoluted.  However, given the very special structure of the matrix $M$, it seems possible -- perhaps probable -- that there is an efficient algorithm to determine whether any given $S$ is projection-forcing.

\section{Acknowledgments}
The authors would like to thank Jamie Radcliffe and the anonymous referees for many helpful comments that improved paper.

\bibliographystyle{plain}
\bibliography{bibtexdatabase2}

\end{document}